\newtheorem{theorem}{Theorem}
\newtheorem{lemma}[theorem]{Lemma}
\newtheorem{rem}{Remark}
\newtheorem{ass}{Assumption}
\newtheorem{definition}{Definition}
\newcommand{\todo}[1]{  \ifthenelse{\boolean{showcomments}}
{\textcolor{ForestGreen}{TO DO:  #1}}{}}
\newcommand{\bassam}[1]{\ifthenelse{\boolean{showcomments}}
{\textcolor{Orange}{(Bassam says: #1)}}{}}
\newcommand{\henrik}[1]{\ifthenelse{\boolean{showcomments}}
{\textcolor{Blue}{(Henrik says: #1)}}{}}
\newcommand{\partha}[1]{\ifthenelse{\boolean{showcomments}}
{\textcolor{Blue}{(Partha says: #1)}}{}}
\newcommand{\emma}[1]{\ifthenelse{\boolean{showcomments}}
{\textcolor{VioletRed}{(Emma says: #1)}}{}}
\newcommand{\ifneeded}[1]{\ifthenelse{\boolean{showcomments}}
{\textcolor{Gray}{#1}}{}}
\newcommand{\edit}[1]{\ifthenelse{\boolean{showedit}}
{\textcolor{Blue}{#1}}{}}
\newcommand{\draft}[1]{\ifthenelse{\boolean{showedit}}
{\textcolor{gray}{#1}}{}}
\newcommand{\ddt}{\frac{\mathrm{d}}{\mathrm{d}t}}
\newcommand{\nth}{$n^{\text{th}}$ }
\newcommand{\rel}{\mathrm{Re}\{\lambda_2\}}
\newcommand{\rell}{\mathrm{Re}\{\lambda_l\}}
\newcommand{\imll}{\mathrm{Im}\{\lambda_l\}}
\newcolumntype{L}[1]{>{\raggedright\let\newline\\\arraybackslash\hspace{0pt}}m{#1}}
\newcolumntype{C}[1]{>{\centering\let\newline\\\arraybackslash\hspace{0pt}}m{#1}}
\newcolumntype{R}[1]{>{\raggedleft\let\newline\\\arraybackslash\hspace{0pt}}m{#1}}
\definecolor{gray3}{rgb}{0.75, 0.75, 0.75}
\definecolor{gray2}{rgb}{0.5, 0.5, 0.5}
\definecolor{gray1}{rgb}{0.25, 0.25, 0.25}
\definecolor{gray0}{rgb}{0.15, 0.15, 0.15}
\definecolor{emmagreen1}{rgb}{0, 0.5, 0.1}
\definecolor{emmaorange1}{rgb}{0.99, 0.5, 0}
\definecolor{emmablue1}{rgb}{0, 0.25, 0.5}
\definecolor{emmaskyblue1}{rgb}{0.4, 0.8, 0.95}
\definecolor{emmapurple1}{rgb}{0.5, 0.25, 0.6}
\title{\LARGE \bf Localized high-order consensus destabilizes large-scale networks}
\author{ {Emma Tegling, Bassam Bamieh and Henrik Sandberg} 
 \thanks{ E. Tegling and H. Sandberg are with the School of Electrical Engineering and Computer Science, KTH Royal Institute of Technology, SE-100 44 Stockholm, Sweden {(\tt tegling, hsan@kth.se)}.} \thanks{ B. Bamieh is with the Department of Mechanical Engineering at the University of California at Santa Barbara, Santa Barbara, CA, USA, 93106. {\tt (bamieh@engineering.ucsb.edu)}.}
  \thanks{This work was supported in part by the Swedish Research Council through grants 2013-5523 and 2016-00861.}
}
\begin{document}
\maketitle


\begin{abstract}
We study the problem of distributed consensus in networks where the local agents have high-order ($n \ge 3$) integrator dynamics, and where all feedback is localized in that each agent has a bounded number of neighbors. 
 We prove that no consensus algorithm based on relative differences between states of neighboring agents can then achieve consensus in networks of any size. 
That is, while a given algorithm may allow a small network to converge to consensus, the same algorithm will lead to instability if agents are added to the network so that it grows beyond a certain finite size. This holds in classes of network graphs whose algebraic connectivity, that is, the smallest non-zero Laplacian eigenvalue, is decreasing towards zero in network size. This applies, for example, to all planar graphs.  Our proof, which relies on Routh-Hurwitz criteria for complex-valued polynomials, holds true for directed graphs with normal graph Laplacians. We survey classes of graphs where this issue arises, and also discuss leader-follower consensus, where instability will arise in any growing, undirected network as long as the feedback is localized. 
%
\end{abstract}

\section{Introduction}
The problem of distributed coordination of networked systems is one of the most active research topics in the field. In particular, since the seminal works by Fax and Murray~\cite{FaxMurray}, Olfati-Saber and Murray~\cite{OlfatiSaber2004}, and Jadbabaie \emph{et al.}~\cite{Jadbabaie2003} much effort has been directed to the sub-problem of distributed consensus. The consensus objective is, simply put, for the agents in the network to reach a common state of agreement. The applications range from distributed computing and sensing to power grid synchronization and coordination of unmanned vehicles~\cite{OlfatiSaber2007}. 

In most cases, the literature has focused on first-order algorithms, or information consensus, or second-order algorithms, which apply to moving point masses. However, higher-order algorithms, which are the focus of the present work, have also received attention, for example in~\cite{Ren2006, Ren2007, Ni2010, Rezaee2015,Jiang2009,Zuo2017}. 
Here, the local dynamics of each agent is modeled as an \nth order integrator ($n \ge 3$), and the control signal -- the consensus algorithm -- is a weighted sum of relative differences between states of neighboring agents. This can be viewed as an important theoretical generalization of the first- and second-order algorithms~\cite{Jiang2009}, but also has practical relevance. For example, not only position and velocity, but also acceleration feedback play a role in flocking behaviors, leading to a model where $n = 3$~\cite{Ren2006}. 

Typically, the research problem in focus is that of convergence of a given set of agents to consensus, and its dependence on various properties of the network. For example, directed communication, a switching or random topology~\cite{Ni2010}, or a leader-follower structure~\cite{Zuo2017}. In this paper, we take a different perspective and inquire as to the \emph{scalability} of a given consensus algorithm to large networks. That is, assuming that the interaction rules between agents are fixed, can the network be allowed to grow by adding new agents? This scenario is treated in~\cite{Bamieh2012} for localized first- and second-order consensus problems, proving asymptotic (in network size) network dimension-dependent bounds on global performance. Similar problems were addressed in~\cite{Patterson2014, SiamiMotee2015, Tegling2017b}. Those works focused on the \emph{performance} of the consensus algorithm. We show in this paper that the problem of high-order consensus is more fundamental: can \emph{stability} be maintained as the network~grows?


The result we present in this paper is clear-cut: the high-order ($n\ge 3$) consensus algorithm treated in, e.g.,~\cite{Ren2006, Ren2007} can \emph{not} allow the network to scale in graphs where the algebraic connectivity is decreasing towards zero in network size. We prove that at some finite network size, the closed-loop stability criteria will inevitably be violated, rendering the consensus algorithm \emph{inadmissible} in our terminology. 

The algebraic connectivity, that is, the smallest non-zero eigenvalue of the weighted network graph Laplacian, decreases towards zero in classes of graphs where the interactions are \emph{localized}, in that the size of each agent's neighborhood is bounded. We show this property for lattices and their fuzzes and subgraphs, planar and constant-genus graphs as well as growing tree graphs, building on existing results on their algebraic connectivities. In leader-follower consensus over undirected graphs, the locality property alone is sufficient to cause inadmissibility. This latter result was shown by Yadlapalli \emph{et al.} in~\cite{Swaroop2006} using a different method than in our work. Here, we generalize their result to leaderless consensus and directed, weighted graphs. 

To the best of our knowledge, the inability of the high-order consensus algorithm to achieve consensus in networks of any size has not been observed in literature apart from the result in~\cite{Swaroop2006}.
While it is noted in~\cite{Ren2007, Jiang2009} that the controller gains must be chosen with care to ensure stability, we point out that such a choice can only be done with knowledge of the algebraic connectivity -- a global network property. Consensus is, however, a distributed controller, and as such should preferably be possible to design and implement in a distributed fashion, without knowledge of global properties. Our result shows that this is not possible for the high-order consensus algorithm. 

The remainder of this short paper is organized as follows. We next introduce the \nth order consensus algorithm and the network model. In Section~\ref{sec:mainresult} we introduce our main result and discuss classes of graphs where it applies. We give numerical examples in Section~\ref{sec:numerical} and conclude in Section~\ref{sec:discussion}.

\section{Problem setup}
\label{sec:setup}
We begin by introducing the modeling framework for the \nth order consensus algorithm. The algorithm we consider adheres to the ones considered in~\cite{Ren2006,Ren2007,Ni2010, Rezaee2015} and is a straightforward extension to the better-known standard first- and second-order consensus algorithms.

\subsection{Network model and definitions}
Consider a graph $\mathcal{G} = \{\mathcal{V}, \mathcal{E}\}$ with $N = |\mathcal{V}|$ nodes. The set $\mathcal{E} \subset \mathcal{V} \times \mathcal{V}$ contains the edges, each of which has an associated nonnegative weight $w_{ij}$. 
We will in general let the graph be directed, so that the edge $(i,j)$ points from node~$i$ (the tail) to node~$j$ (the head).  The neighbor set $\mathcal{N}_i$ of node~$i$ is the set of nodes~$j$ to which there is an edge $(i,j)\in \mathcal{E}$. 
The outdegree of node~$i$ is defined as $d^+_i = \sum_{j =1}^N w_{ij}$ and its indegree is $d^-_i = \sum_{j =1}^N w_{ji}$ ($w_{ij}=0$ if $(i,j)\notin \mathcal{E}$). 
The graph~$\mathcal{G}$ is \emph{balanced} if $d^+_i = d^-_i$ for all $i \in \mathcal{V}$ and  \emph{undirected} if $(i,j) \in \mathcal{E} \rightarrow (j,i) \in \mathcal{E}$ for all $i,j \in \mathcal{V}$ and $w_{ij} = w_{ji}$. It is \emph{strongly connected} if there is a directed path connecting any two nodes $i,j \in \mathcal{V}$ and 
has a \emph{connected spanning tree} if there is a path from some node $i\in \mathcal{V}$ to any other node~$j \in \mathcal{V}\backslash\{i\}$.

The weighted graph Laplacian~$L$ is defined as follows:
\vspace{-0.5mm}
\begin{equation}
\label{eq:laplaciandef}
[L]_{ij} = \begin{cases}  -w_{ij}& \mathrm{if}~ j \neq i~\mathrm{and}~j \in \mathcal{N}_i
\\ 
\sum_{k \in \mathcal{N}_i} w_{ik} & \mathrm{if}~ j = i\\
0 & \mathrm{otherwise.}
 \end{cases}
\end{equation}
\vspace{-0.5mm}
By this definition, $L = D-A$, where $D = \mathrm{diag}\{d_1^+,d_2^+, \ldots, d_N^+\}$ is the diagonal matrix of outdegrees and $A$ is the adjacency matrix of the graph. Denote by $\lambda_l$, $l = 1,\ldots,N$ the eigenvalues of~$L$. Zero will be a simple eigenvalue of~$L$ if the graph has a connected spanning tree, which is what we assume henceforth. Remaining eigenvalues are in the right half plane (RHP). They are numbered so that $0 = \lambda_1 < \mathrm{Re}\{\lambda_2\}\le \ldots \le \mathrm{Re}\{\lambda_N \}$. 

In this paper, we will assume that the graph Laplacian~$L$ is \emph{normal} which requires that $L^TL = LL^T$. This means that $L$ is unitarily similar to a diagonal matrix. If the graph is undirected, $L$ is symmetric and thus always normal. For a directed graph, the normality of $L$ implies that $\mathcal{G}$ is balanced.

\subsection{\nth order consensus}
The local dynamics at each node~$i\in \mathcal{V}$ is modeled as a chain of $n$ integrators:
\begin{align*}
\ddt x_i^{(0)}(t)  &= x_i^{(1)}(t)\\
& ~\vdots \\
\ddt x_i^{(n-2)}(t)  &= x_i^{(n-1)}(t) \\ 
\ddt x_i^{(n-1)}(t)  &= u_i(t)
\end{align*}
%
where we let the information state $x_i(t)  \in \mathbb{R}$ (see Remark~\ref{rem:dimension}). 
The notation for time derivatives is such that $x_i^{(0)}(t) = x_i(t)$, $x_i^{(1)}(t) = \ddt x_i(t)  = \dot{x}_i(t)$ etc. until $x_i^{(n)}(t) = \frac{\mathrm{d}^n}{\mathrm{d}t^n} x_i(t)$. 
Going forward, we will often drop the time dependence in the notation. 

We consider the following \nth order consensus algorithm:
\begin{equation}
\label{eq:consensuscompact}
u_i = - \sum_{k = 0}^{n-1} a_k\sum_{j \in \mathcal{N}_i} w_{ij}(x_i^{(k)} - x_j^{(k)})
\end{equation}
where the $a_k$ are nonnegative fixed gains.
The feedback in~\eqref{eq:consensuscompact} is termed \emph{relative} as it only based on differences between states of neighboring agents. 
The impact of absolute feedback, where the controllers have access to measurements of the absolute local state, is discussed briefly in Section~\ref{sec:discussion}. 

Defining the full state vector $\xi = [x^{(0)},x^{(1)},\ldots,x^{(n-1)}]^T$, we can write the system's closed-loop dynamics as
\begin{equation}
\label{eq:closedloop}
\ddt \xi = \underbrace{\begin{bmatrix}
0 & I_N & 0 & \cdots & 0\\
0 & 0 & I_N & \cdots &\vdots \\
0 & 0 & 0 & \ddots &\vdots \\
0 & 0 & 0 & \cdots &I_N \\
-a_0L & -a_1L & -a_2L & \cdots & -a_{\mathrm{n-1}}L
\end{bmatrix}}_{\mathcal{A}} \xi,
\end{equation}
where the graph Laplacian~$L$ was defined in~\eqref{eq:laplaciandef} and $I_N$ denotes the $N\times N$ identity matrix.

\begin{rem} \label{rem:dimension}
We limit the analysis to a scalar information state, though an extension to $x_i(t) \in \mathbb{R}^m$ is straightforward provided the system is controllable in the $m$ coordinate directions. In this case, the system dynamics can be written $\dot{\xi} =   (\mathcal{A} \otimes I_m) \xi$, where $\otimes$ denotes the Kronecker product. This would not affect this paper's main result concerning the stability of $\mathcal{A}$.
\end{rem}
\vspace{0.8mm}

\subsubsection{Leader-follower consensus}

We may also consider leader-follower consensus as in~\cite{Swaroop2006}. Here, the state of Agent~1 is assumed fixed, meaning that it acts as a leader for remaining agents (under the assumption that there is a directed path to each of them from agent 1). WLOG we can then set $x_1 = \dot{x}_1 = \ldots,x_1^{n} \equiv 0$. 
The closed-loop dynamics for remaining agents can be written 
\begin{equation}
\label{eq:closedloopred}
\ddt \bar{\xi}= \underbrace{\begin{bmatrix}
0 & I_{N-1} & 0 & \cdots & 0\\
0 & 0 & I_{N-1} & \cdots &\vdots \\
0 & 0 & 0 & \ddots &\vdots \\
0 & 0 & 0 & \cdots & I_{N-1} \\
-a_0\bar{L} & -a_1\bar{L}  & -a_2\bar{L}  & \cdots & -a_{n-1}\bar{L} \\
\end{bmatrix}}_{\bar{\mathcal{A}}} \bar{\xi} ,
\end{equation}
where $\bar{L}$ is the \emph{grounded} graph Laplacian obtained by deleting the first row and column of $L$ and~$\bar{\xi}$ is obtained by removing the states of agent~1. Note that~$\bar{L}$ unlike~$L$ has all of its eigenvalues in the right half plane~\cite{Xia2016}. 

\subsection{Conditions for consensus and admissibility}
\label{sec:reachconsensus}
Consensus among the agents is said to be reached if $x_i^{(k)} \rightarrow x_j^{(k)}$ for all $i,j \in \mathcal{V}$ and for $k = 0,1,\ldots,n-1$. The algorithm~\eqref{eq:consensuscompact} is known to achieve consensus if the eigenvalues of~$\mathcal{A}$ are in the left half plane, apart from exactly $n$ zero eigenvalues associated with the drift of the average states. This condition is in line with standard results for first- and second-order consensus, and is shown in~\cite{Ren2007} for $n = 3$:
\begin{theorem}[\cite{Ren2007}, Theorem 3.1 ]
\label{thm:consensus}
In the case of $n = 3$, the algorithm~\eqref{eq:consensuscompact} achieves consensus exponentially if and only if $\mathcal{A}$ has exactly three zero eigenvalues and all of the other eigenvalues have negative real parts. 
\end{theorem}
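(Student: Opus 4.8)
The plan is to exploit the block-triangular structure of $\mathcal{A}$ together with the spectral structure of $L$ to block-diagonalize the closed-loop map, reducing the stability question to that of $N$ decoupled $3\times3$ companion-type blocks. Since $L$ is normal, there is a unitary $U$ with $U^{*}LU=\mathrm{diag}(\lambda_1,\dots,\lambda_N)$ and $\lambda_1=0$ simple (because $\mathcal{G}$ has a connected spanning tree), the corresponding eigenvector being $\mathbf 1/\sqrt N$ with $\mathbf 1$ the all-ones vector. Conjugating $\mathcal{A}$ by $I_3\otimes U$ (which fixes the off-diagonal identity blocks and diagonalizes every copy of $L$) and permuting coordinates turns $\mathcal{A}$ into a direct sum of the blocks
\begin{equation*}
A_l=\begin{bmatrix} 0 & 1 & 0\\ 0 & 0 & 1\\ -a_0\lambda_l & -a_1\lambda_l & -a_2\lambda_l\end{bmatrix},\qquad l=1,\dots,N,
\end{equation*}
so that $\mathrm{spec}(\mathcal{A})=\bigcup_{l}\mathrm{spec}(A_l)$ and each $A_l$ has characteristic polynomial $s^3+a_2\lambda_l s^2+a_1\lambda_l s+a_0\lambda_l$.

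Next I would isolate the $l=1$ block. Because $\lambda_1=0$, $A_1$ is the nilpotent shift, i.e. a single $3\times3$ Jordan block at the origin, so $\mathcal{A}$ always has an eigenvalue $0$ of algebraic multiplicity exactly $3$ (and geometric multiplicity $1$) contributed by this block. On the state level this is the statement that the average $\bar x^{(k)}=\tfrac1N\mathbf 1^{T}x^{(k)}$ satisfies $\ddt\bar x^{(2)}=-\tfrac1N\sum_k a_k\mathbf 1^{T}Lx^{(k)}=0$ — here normality forces $\mathcal{G}$ balanced, hence $\mathbf 1^{T}L=0$ — so $\bar x(t)$ is a polynomial of degree $\le 2$ in $t$. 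This is the unavoidable drift of the average states, and it is confined to the $3$-dimensional $\mathcal{A}$-invariant consensus subspace $\mathcal{C}=\mathbb{R}^{3}\otimes\mathrm{span}\{\mathbf 1\}$.

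Then I would treat the disagreement. Normality gives the orthogonal, $\mathcal{A}$-invariant splitting $\mathbb{R}^{3N}=\mathcal{C}\oplus\mathcal{C}^{\perp}$ with $\mathcal{C}^{\perp}=\mathbb{R}^{3}\otimes\mathbf 1^{\perp}$, and $\mathcal{A}$ restricted to $\mathcal{C}^{\perp}$ is unitarily similar to $A_2\oplus\cdots\oplus A_N$. Writing $\xi(t)=P_{\mathcal{C}}\xi(t)+P_{\mathcal{C}^{\perp}}\xi(t)$, the second term is exactly the stacked disagreement $x^{(k)}-\bar x^{(k)}\mathbf 1$, and consensus in the sense $x_i^{(k)}-x_j^{(k)}\to0$ for all $i,j,k$ is equivalent to $P_{\mathcal{C}^{\perp}}\xi(t)\to0$. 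As this is a finite-dimensional LTI flow, asymptotic convergence is equivalent to exponential convergence and to $A_2\oplus\cdots\oplus A_N$ being Hurwitz. This yields both implications: if consensus holds, each $A_l$ with $l\ge2$ is Hurwitz, hence has no imaginary-axis eigenvalue, so together with the three zeros from $A_1$ the matrix $\mathcal{A}$ has exactly three zero eigenvalues and all remaining eigenvalues in the open LHP; conversely, that spectral condition forces the three roots of every $A_l$ ($l\ge2$) into the open LHP, i.e. $A_2\oplus\cdots\oplus A_N$ Hurwitz, so $P_{\mathcal{C}^{\perp}}\xi\to0$ exponentially while $P_{\mathcal{C}}\xi$ stays in $\mathcal{C}$, giving exponential consensus.

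The step requiring the most care — and the main obstacle — is making the reduction rigorous: verifying that $\mathcal{C}$ and $\mathcal{C}^{\perp}$ are genuinely $\mathcal{A}$-invariant (this is precisely where normality/balancedness of $\mathcal{G}$ is used), that the Jordan block $A_1$ produces no growth beyond the prescribed polynomial drift, and that the only possible source of additional zero eigenvalues would be a vanishing constant term $a_0\lambda_l$ with $l\ge2$. Note that this theorem needs only the existence of the block decomposition, not the explicit Routh–Hurwitz inequalities for $n=3$. Finally, for a general directed graph possessing only a spanning tree (no normality) the same scheme goes through with the Jordan form of $L$ in place of the unitary diagonalization and with the Perron left eigenvector $p$ ($p^{T}L=0$, $p^{T}\mathbf 1=1$) defining the relevant average, at the cost of heavier bookkeeping.
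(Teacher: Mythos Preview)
The paper does not supply its own proof of this statement: Theorem~\ref{thm:consensus} is quoted verbatim from~\cite{Ren2007} (together with the companion Lemma~3.1), and the authors only remark that ``the proofs in~\cite{Ren2007} can be straightforwardly extended to $n>3$.'' So there is no in-paper argument to compare yours against.

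That said, your argument is correct and in fact lines up with the machinery the paper \emph{does} develop for its own Theorem~\ref{thm:main}: the conjugation by $I_n\otimes U$ and the decomposition into the blocks $A_l=\hat{\mathcal A}_l$ is exactly the block-diagonalization~\eqref{eq:diagonalizing} carried out there. What you add on top of that---the explicit $\mathcal A$-invariant splitting $\mathcal C\oplus\mathcal C^{\perp}$, the identification of $P_{\mathcal C^{\perp}}\xi$ with the disagreement vector, and the observation that convergence of a finite-dimensional LTI flow is equivalent to a Hurwitz restriction---is precisely the bridge needed to turn the spectral statement into the consensus statement, and is the content of Ren's original proof. Your remark that $\mathbf 1^{T}L=0$ (balancedness, implied here by normality) is what makes $\mathcal C^{\perp}$ invariant is the right place to flag the hypothesis, and your closing note about replacing $U$ by a Jordan basis and $\tfrac1N\mathbf 1$ by the left Perron vector $p$ in the merely-spanning-tree case is the standard way to drop normality; the paper itself notes a weaker version of this in the remark following Theorem~\ref{thm:main}. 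One small caution: your claim that $A_1$ contributes zero eigenvalues of algebraic multiplicity ``exactly $3$'' is true, but the statement that $\mathcal A$ has exactly three zero eigenvalues also requires $a_0\lambda_l\neq 0$ for $l\ge 2$, which you mention only in passing---under the paper's standing assumptions ($a_0>0$ and $\lambda_1=0$ simple) this is automatic, but it is the substance of the companion Lemma~3.1 and deserves to be stated rather than folded into the ``obstacle'' paragraph.
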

We also require the following lemma:
\begin{lemma}[\cite{Ren2007}, Lemma 3.1]
In the case of $n = 3$, the matrix $\mathcal{A}$ has exactly three zero eigenvalues if and only if $L$ has a simple zero eigenvalue.
\end{lemma}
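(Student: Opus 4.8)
The plan is to exploit the block-companion structure of the closed-loop matrix $\mathcal{A}$ in~\eqref{eq:closedloop} with $n=3$ and to factor its characteristic polynomial through the spectrum of $L$. Writing $sI_{3N}-\mathcal{A}$ as a $3\times3$ block matrix and applying the standard block-companion determinant identity, I would first obtain
\begin{align*}
\det(sI_{3N}-\mathcal{A}) &= \det\!\big(s^3 I_N + a_2 s^2 L + a_1 s L + a_0 L\big)\\
&= \det\!\big(s^3 I_N + (a_2 s^2 + a_1 s + a_0)\,L\big).
\end{align*}
Since $L$ is assumed normal it is unitarily diagonalizable as $L = U\,\mathrm{diag}(\lambda_1,\dots,\lambda_N)\,U^*$ (a Schur triangularization of $L$ would in fact suffice, so normality is not essential for this step), and hence, writing $p(s):=a_2 s^2 + a_1 s + a_0$, the matrix $s^3 I_N + p(s)L$ has eigenvalues $s^3 + p(s)\lambda_l$, giving
\begin{equation*}
\det(sI_{3N}-\mathcal{A}) = \prod_{l=1}^{N}\big(s^3 + (a_2 s^2 + a_1 s + a_0)\,\lambda_l\big).
\end{equation*}
Each factor is precisely the characteristic polynomial of the $3\times3$ companion matrix obtained by substituting the scalar $\lambda_l$ for $L$, i.e.\ the closed loop decouples into $N$ scalar third-order subsystems along the eigenbasis of $L$.

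The second step is to count the zero roots. Evaluating the $l$-th factor at $s=0$ gives $a_0\lambda_l$. Under the standing assumption $a_0>0$ (the zeroth-order, position-feedback gain, which must be positive for consensus to be reached even in a three-node network), $s=0$ is a root of the $l$-th factor if and only if $\lambda_l=0$; and when $\lambda_l=0$ that factor equals exactly $s^3$, contributing a zero of algebraic multiplicity three, while factors with $\lambda_l\neq0$ contribute no zero root. Therefore the algebraic multiplicity of the eigenvalue $0$ of $\mathcal{A}$ equals $3m$, where $m$ is the algebraic multiplicity of the eigenvalue $0$ of $L$. Since the rows of $L$ sum to zero we always have $m\ge1$, so $\mathcal{A}$ has exactly three zero eigenvalues if and only if $3m=3$, i.e.\ if and only if $m=1$ — the statement that $L$ has a simple zero eigenvalue.

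I expect the only delicate point to be the bookkeeping of algebraic multiplicities: one must be sure that the $s^3$ contributed by each vanishing $\lambda_l$ accounts for \emph{all} the zero eigenvalues of $\mathcal{A}$ and that no spurious zeros enter through the factors with $\lambda_l\neq0$. This is exactly where the hypothesis $a_0\neq0$ is used, and it is the one assumption worth making explicit in the statement. The block-companion determinant identity in the first display is routine but is the other place a careful reader will want the steps written out; beyond that, the argument is essentially the observation that the third-order dynamics diagonalize over the eigenbasis of $L$.
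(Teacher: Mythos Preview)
Your argument is correct. The paper does not supply its own proof of this lemma (it is quoted from~\cite{Ren2007}), but the route you take---factoring $\det(sI_{3N}-\mathcal{A})$ over the spectrum of $L$ to obtain the per-eigenvalue cubics $s^3+a_2\lambda_l s^2+a_1\lambda_l s+a_0\lambda_l$ and then counting zero roots---is exactly the block-diagonalization the paper itself performs in the proof of Theorem~\ref{thm:main}, so your approach is fully aligned with the paper's methodology.
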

The proofs in~\cite{Ren2007} can be straightforwardly extended to $n>3$.

This means that it is sufficient to verify that the \linebreak $(N-1)\cdot n$ non-zero eigenvalues of~$\mathcal{A}$ have negative real parts. In this paper, we will treat systems where this can be true for small networks, but where at least one eigenvalue leaves the left half plane and causes instability as the network grows beyond some network size~$\bar{N}$. In these cases, we call the control algorithm \emph{inadmissible}. 

\begin{definition}[Admissibility]
A control design $u$ is \emph{admissible} only if the resulting closed-loop system reaches consensus for \emph{any} finite network size $N$. 
\end{definition}
%


\section{Inadmissibility of high-order consensus}
\label{sec:mainresult}
This section is devoted to our main result. We first describe the key underlying assumptions, before proving that the high-order consensus algorithm will be inadmissible if the network graph has what we term a \emph{decreasing algebraic connectivity}. This property applies to several classes of graphs, and we end this section by listing a few of them. 
\subsection{Underlying assumptions}
The following assumptions are important for our analysis. 
\begin{ass}[Locality]
\label{ass:q}
The feedback is \emph{localized}, meaning that the controller uses measurements only from a neighborhood of size at most $q$, where $q$ is fixed and independent of $N$. That is, $ |\mathcal{N}_i| \le q$ for all $i \in \mathcal{V}$.
\end{ass}
\begin{ass}[Finite weights and gains]
\label{ass:finite}
The system gains and edge weights are finite, that is, $w_{ij}\le w_{\max}<\infty$ for all $(i,j) \in \mathcal{E}$ and $a_k\le a_{\max} <\infty $ for all $k = 0,1,\ldots, n$.
\end{ass}
\begin{ass}[Fixed parameters]
\label{ass:fixed}
The gains $a_k$ for all $k = 0,1,\ldots, n$, the maximum edge weight $w_{\max}$, 
and the locality parameter~$q$ do not change if a node (with connecting edges) is added to the graph~$\mathcal{G}$. That is, these parameters are all independent of the total network size~$N$.
\end{ass}
\vspace{1mm}
In the following, the notion of an increase in the network size~$N$ should be understood as the addition of nodes to the network (along with connecting edges) in such a manner that Assumptions~\ref{ass:q}--\ref{ass:fixed} remain satisfied. 
These assumptions contribute to the key property; that the algebraic connectivity of $\mathcal{G}$ decreases towards zero. This is clarified through examples in~Section~\ref{sec:graphs}.

\subsection{Main result}
This paper's main result is negative and states that the consensus algorithm with $n\ge 3$ can never be admissible in certain graphs. 

\begin{theorem}
\label{thm:main}
If $n\ge 3$, no control on the form~\eqref{eq:consensuscompact} is admissible under Assumptions~\ref{ass:q}--\ref{ass:fixed} if the graph $\mathcal{G}$ is such that $\mathrm{Re}\{\lambda_2\} \rightarrow 0$ as $N \rightarrow \infty$. 
\end{theorem}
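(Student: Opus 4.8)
The plan is to reduce the location of the $nN$ eigenvalues of $\mathcal{A}$ in~\eqref{eq:closedloop} to the roots of $N$ scalar polynomials of degree $n$, and then to show that one of these polynomials is forced to have a root in the closed right half plane once $N$ is large enough. Since $L$ is normal, write $L=V\Lambda V^{*}$ with $V$ unitary and $\Lambda=\mathrm{diag}(\lambda_1,\dots,\lambda_N)$; the block companion structure of $\mathcal{A}$ is preserved by the unitary similarity $\mathrm{diag}(V^{*},\dots,V^{*})\,\mathcal{A}\,\mathrm{diag}(V,\dots,V)$, which only replaces each block $-a_kL$ by $-a_k\Lambda$ and leaves the identity blocks untouched. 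After a coordinate permutation this matrix is block diagonal with $n\times n$ companion blocks, so the spectrum of $\mathcal{A}$ is the union over $l$ of the roots of
\[
 p_l(s)=s^{n}+a_{n-1}\lambda_l s^{n-1}+\cdots+a_1\lambda_l s+a_0\lambda_l .
\]
As $\lambda_1=0$ is simple, $p_1(s)=s^{n}$ accounts for exactly the $n$ admissible zero eigenvalues, while $p_l(0)=a_0\lambda_l\neq0$ for $l\ge2$ (one may assume $a_0>0$, since $a_0=0$ produces extra zero eigenvalues and precludes consensus for every $N\ge2$). By Theorem~\ref{thm:consensus} together with its extension to $n>3$, consensus holds iff $p_l$ is Hurwitz for all $l\ge2$; hence it suffices to produce a finite $N$ for which $p_2$ — the polynomial attached to the eigenvalue of smallest real part — has a root with nonnegative real part.

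Next I would record where the structural assumptions enter: applying Gershgorin's theorem to $L=D-A$, each $\lambda_l$ lies in some disc $\{\,|z-d_i^{+}|\le d_i^{+}\}$, so $|\lambda_l|\le 2\max_i d_i^{+}\le 2qw_{\max}=:M$ for all $l$ and, by Assumptions~\ref{ass:q}--\ref{ass:fixed}, this bound $M$ does not depend on $N$. Thus, under the hypothesis of the theorem, $\lambda_2$ is confined for all $N$ to the compact set $\{\mathrm{Re}(z)\ge0,\ |z|\le M\}$ while $\mathrm{Re}\{\lambda_2\}\to0$.

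The heart of the proof is a Routh--Hurwitz test for the complex polynomial $p_2$. I would pass to the real polynomial $g(s)=p_2(s)\,\overline{p_2(\bar s)}$ of degree $2n$, whose roots are those of $p_2$ together with their complex conjugates, so that $g$ is Hurwitz iff $p_2$ is. Writing $\sigma=\mathrm{Re}\{\lambda_2\}$ and $\rho=|\lambda_2|$, the low-order coefficients of $g$ are explicit functions of $\sigma,\rho$ and the gains; for $n=3$ one finds $g_6=1$, $g_5=2a_2\sigma$, $g_4=2a_1\sigma+a_2^{2}\rho^{2}$, $g_3=2a_0\sigma+2a_1a_2\rho^{2}$, so that the Hurwitz minor $\Delta_2=g_5g_4-g_3$ equals, up to a positive factor, $2a_1a_2\sigma^{2}+a_2^{3}\sigma\rho^{2}-a_0\sigma-a_1a_2\rho^{2}$, and $\Delta_2>0$ is \emph{necessary} for $p_2$ to be Hurwitz. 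Using only $0<\sigma\le\rho\le M$ and positivity of the gains, this expression is bounded above by $\sigma\bigl(a_1a_2\sigma+a_2^{3}\sigma^{2}-a_0\bigr)$, which is strictly negative as soon as $\sigma$ is below a threshold depending only on $a_0,a_1,a_2$ — and, crucially, uniformly in $\rho\in(0,M]$. Since $\mathrm{Re}\{\lambda_2\}\to0$, there is a finite $\bar N$ at which $\Delta_2<0$; then $g$, hence $p_2$, is not Hurwitz, so $\mathcal{A}$ possesses a nonzero eigenvalue with nonnegative real part and consensus fails at size $\bar N$. For $n>3$ I would run the same argument, reading off from the complex Routh--Hurwitz array a necessary inequality of the same shape with $(a_{n-1},a_{n-2},a_{n-3})$ in the roles of $(a_2,a_1,a_0)$; gain choices in which some of these vanish are degenerate — they make a coefficient of $p_2$ or of $g$ equal to zero and again obstruct consensus directly — and are handled separately.

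The step I expect to be the main obstacle is exactly this last uniformity: ensuring the violated Routh--Hurwitz inequality fails for \emph{every} admissible modulus $\rho=|\lambda_2|\le M$ once $\mathrm{Re}\{\lambda_2\}$ is small, not merely in the easier regime $\lambda_2\to0$. In that easier regime a direct perturbation argument already works — the $n$ small roots of $p_2$ cluster near the $n$th roots of $-a_0\lambda_2$, and for $n\ge3$ these $n$ equally spaced directions (angular spacing $2\pi/n<\pi$) cannot all lie in the open left half plane, so $p_2$ has a root strictly in the right half plane — and it is precisely here that the hypothesis $n\ge3$ is essential, since for $n\le2$ the polynomial $p_2$ stays Hurwitz for arbitrarily small positive $\mathrm{Re}\{\lambda_2\}$. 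The remaining work is the bookkeeping needed to push this conclusion through uniformly over the whole compact range of $\lambda_2$, which is what the complex Routh--Hurwitz computation accomplishes.
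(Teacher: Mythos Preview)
Your proof is correct and follows essentially the same route as the paper: unitary block-diagonalization of $\mathcal{A}$ via the normal Laplacian, reduction to the family of characteristic polynomials $p_l(s)=s^n+a_{n-1}\lambda_l s^{n-1}+\cdots+a_0\lambda_l$, and then a Routh--Hurwitz--type necessary inequality that fails for $l=2$ once $\mathrm{Re}\{\lambda_2\}$ is small.

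The only methodological difference is in how the Routh--Hurwitz test is executed. The paper applies the complex-coefficient criterion directly (citing Tondl) and obtains, for all $n\ge 3$, the necessary condition
\[
a_{n-1}(\mathrm{Re}\{\lambda_l\})^2\bigl(a_{n-1}a_{n-2}\mathrm{Re}\{\lambda_l\}-a_{n-3}\bigr)
+a_{n-2}(\mathrm{Im}\{\lambda_l\})^2\bigl(a_{n-1}^2\mathrm{Re}\{\lambda_l\}-a_{n-2}\bigr)>0,
\]
in which \emph{both} bracketed factors become negative as $\mathrm{Re}\{\lambda_l\}\to 0$, independently of $\mathrm{Im}\{\lambda_l\}$. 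You instead form the real polynomial $g(s)=p_2(s)\,\overline{p_2(\bar s)}$ and use the standard real Hurwitz minor $H_2$; this is equivalent (the complex criterion is in fact derived by exactly this device), and your $H_2$ can be rewritten as $\sigma(a_{n-2}a_{n-1}\sigma+a_{n-1}^3\sigma^2-a_{n-3})+a_{n-1}\tau^2(a_{n-1}^2\sigma-a_{n-2})$, with the same ``two negative brackets'' structure. Seen this way, your Gershgorin bound $|\lambda_2|\le M$ and the uniformity-in-$\rho$ worry are unnecessary: the inequality fails for small $\sigma$ regardless of $\tau$, so Assumptions~\ref{ass:q}--\ref{ass:fixed} are not actually needed at this step (they enter only through the graph classes in Section~\ref{sec:graphs} where $\mathrm{Re}\{\lambda_2\}\to 0$ is established). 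Your upper bound $\sigma(a_1a_2\sigma+a_2^3\sigma^2-a_0)$ also tacitly uses $\sigma\le a_1/a_2^2$; this is harmless since you are in the small-$\sigma$ regime anyway, but it would be cleaner to display the $\sigma$/$\tau$ decomposition directly.
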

\begin{proof}
The first step of the proof is to block-diagonalize the matrix~$\mathcal{A}$. Let $U$ be the unitary matrix that diagonalizes the graph Laplacian~$L$, so that $U^*LU = \Lambda = \mathrm{diag}\{0, \lambda_2,\ldots,\lambda_N\}$. By pre- and post-multiplying $\mathcal{A}$ by the $(Nn \times Nn)$ matrix $\mathcal{U} = \mathrm{diag} \{U,U,\ldots,U\}$, we get 
\begin{equation}
\label{eq:diagonalizing}
\mathcal{U}^*\mathcal{A}\mathcal{U} = \underbrace{\begin{bmatrix}
0 & I_{N} & 0 & \cdots & 0\\
0 & 0 & I_{N} & \cdots &\vdots \\
0 & 0 & 0 & \ddots &\vdots \\
0 & 0 & 0 & \cdots &I_{N} \\
-a_0\Lambda & -a_1 \Lambda & -a_2\Lambda & \cdots & -a_{\mathrm{n-1}}\Lambda
\end{bmatrix}}_{\hat{\mathcal{A}}}.
\end{equation}
This can be re-arranged into $N$ decoupled sub-matrices~$\hat{\mathcal{A}}_l$:
\[ \hat{\mathcal{A}}_l = {\begin{bmatrix}
0 & 1 & 0 & \cdots & 0\\
0 & 0 & 1 & \cdots &\vdots \\
0 & 0 & 0 & \ddots &\vdots \\
0 & 0 & 0 & \cdots &1 \\
-a_0\lambda_l & -a_1 \lambda_1 & -a_2\lambda_l & \cdots & -a_{\mathrm{n-1}}\lambda_l
\end{bmatrix}}\]
%
 for $l = 1,\ldots,N$. 
 The eigenvalues of~$\mathcal{A}$ are the union of the eigenvalues of all $\hat{\mathcal{A}}_l$. Clearly, the~$n$ zero eigenvalues are obtained from $\hat{\mathcal{A}}_1$ since $\lambda_1 = 0$. Therefore, we must require all eigenvalues of all $\hat{\mathcal{A}}_l$, $l = 2,\ldots,N$ to have negative real parts for any~$N$ to ensure admissibility. 

The characteristic polynomial of each~$\hat{\mathcal{A}}_l$ is
\begin{equation}
\label{eq:charpoly}
p_l(s) = s^n + a_{n-1} \lambda_l s^{n-1} + \ldots + {a}_1\lambda_ls + a_0\lambda_l.
\end{equation}
In general, the eigenvalues $\lambda_l$ are complex-valued. Consider therefore the Routh-Hurwitz criteria for polynomials with complex coefficients. As these criteria do not appear frequently in literature, we include a detailed derivation here.

Consider the polynomial 
\begin{equation}
\label{eq:examplepoly}
p(\mu) = \mu^n + (f_{n-1} + \mathbf{j} g_{n-1})\mu^{n-1} +\ldots (f_0 + \mathbf{j} g_0) = 0,
\end{equation} 
where $\mathbf{j} = \sqrt{-1}$ denotes the imaginary number.
The roots $\mu$ will be such that $\mathrm{Im}(\mu)>0$ if and only if all inequalities 
 \[-\Delta_2 = -\begin{vmatrix}
1 & f_{n\!-\!1}\\ 0 & g_{n\!-\!1}
\end{vmatrix} >0,~~\Delta_4 = \begin{vmatrix}
1 & f_{n\!-\!1} & f_{n\!-\!2} & f_{n\!-\!3} \\ 0 & g_{n\!-\!1}& g_{n\!-\!2} & g_{n\!-\!3} \\ 0 & 1 & f_{n\!-\!1} & f_{n\!-\!2}\\ 0& 0 & g_{n\!-\!1}& g_{n\!-\!2} \\
\end{vmatrix} >0,  \]
\begin{multline}
\label{eq:Tondlcrit}
\cdots~, ~(-1)^n\Delta_{2n} = \\(-1)^n\begin{vmatrix}
1 & f_{n\!-\!1}& \cdots & f_0 & 0 &\cdots &\cdots & 0\\
0 & g_{n\!-\!1} & \cdots & g_0 & 0& \cdots &\cdots & 0\\
0 & 1 & \cdots & f_{1} & f_0 & 0 & \cdots & 0\\
0 & 0 & \cdots & g_{1} & g_0 & 0 & \cdots & 0\\
&&& \vdots &&& &\\
0 & \cdots & \cdots & 0 & 1 & \cdots & f_{1} & f_0\\
0 & \cdots & \cdots & 0 & 0 & \cdots & g_{1} & g_0\\
\end{vmatrix} >0
\end{multline}
%
%
are satisfied~\cite[pp 21f]{Tondl1965}.  Evaluating the determinants, the first two conditions become \vspace{-0.7mm}
\begin{align} \label{eq:firstcond}
g_{n\!-\!1}&<0,\\
\label{eq:secondcond}
 f_{n\!-\!1}g_{n\!-\!1}g_{n\!-\!2} - f_{n\!-\!2}^{\phantom{2}}g_{n\!-\!1}^2 + g_{n\!-\!3}g_{n\!-\!1} - g_{n\!-\!2}^2 &>0.
\end{align}
We are interested in the polynomial $p_l(s)$ in~\eqref{eq:charpoly} and seek a condition for $\mathrm{Re}\{s\}<0$. 
Such a condition is obtained by substituting $\mu = -\mathbf{j}s$ in~\eqref{eq:examplepoly}, and then identifying the coefficients from~\eqref{eq:charpoly}. 
The coefficients that appear in~\eqref{eq:firstcond}--\eqref{eq:secondcond} are $f_{n-1} = a_{n-1}\mathrm{Im}\{ \lambda_l \},$ $g_{n-1} = -a_{n-1}\mathrm{Re}\{\lambda_l\},$ $f_{n-2} = -a_{n-2}\mathrm{Re}\{\lambda_l \},$ $g_{n-2} = -a_{n-2}\mathrm{Im}\{\lambda_l\},$ $f_{n-3} = -a_{n-3}\mathrm{Im}\{\lambda_l\},$  $g_{n-3} = a_{n-3}\mathrm{Re}\{\lambda_l\}$. Note that these relations hold regardless of~$n$, since the coefficient in front of the the highest order term is 1 in both~\eqref{eq:charpoly} and~\eqref{eq:examplepoly}.

Now, the condition \eqref{eq:firstcond} reads $a_{n-1}\mathrm{Re}\{\lambda_l\}>0$, which is always true for $l = 2,\ldots,N$ if $a_{n-1}>0$, since $\mathrm{Re}\{\lambda_l\}>0$. The condition~\eqref{eq:secondcond} can after some manipulation be written \vspace{-1mm}
\begin{multline}
\label{eq:RHcriterion}
a_{n-1}(\rell)^2(a_{n-1}a_{n-2}\rell - a_{n-3}) + \\ + a_{n-2}(\imll)^2(a_{n-1}^2\rell - a_{n-2}) >0,
\end{multline} 
for $l = 2,\ldots,N$.
While the factors in front of the brackets remain positive for all~$\lambda_l$ (provided $a_k>0$), the brackets will eventually become negative if $\rell \rightarrow 0$ for some $l$. Thus, if $\rel$, where $\lambda_2$ is the  eigenvalue with smallest real part, is decreasing in~$N$ towards zero, the condition~\eqref{eq:RHcriterion} will eventually (i.e., for $N>\bar{N}$ for some finite $\bar{N}$) be violated. 

Therefore, if $n\ge 3$, at least one root of the characteristic polynomial~$p_2(s)$ will have a non-negative real part if $N>\bar{N}$. Theorem~\ref{thm:consensus} is then violated and the control is not admissible. 
\end{proof}
\begin{rem}
If the graph is undirected, then the polynomial~$\eqref{eq:charpoly}$ has real-valued coefficients. The result can then be derived using the standard Routh-Hurwitz criteria. This gives the simpler condition $a_{n-1}a_{n-2}\lambda_2 - a_{n-3} >0$, which can never remain satisfied if $\lambda_2 \rightarrow 0$ as $N\rightarrow \infty$. 
\end{rem}
\begin{rem}
The condition that $L$ be normal can be relaxed if $L$ is diagonalizable as in~\eqref{eq:diagonalizing} by some (non-unitary) matrix. The remainder of the proof would hold true. 
\end{rem}
\vspace{0.8mm}
Theorem~\ref{thm:main} implies that the high-order consensus algorithm can never allow the network size of certain graphs to increase indefinitely without leading to instability. Instability will occur at the smallest~$N$ for which the Routh-Hurwitz criteria~\eqref{eq:Tondlcrit} in the proof are not satisfied, and at least one eigenvalue leaves the open left half plane. We will term this critical network size~$\bar{N}$.  
In Fig.~\ref{fig:criticalN} we display~$\bar{N}$ for $n = 3,4,5$ in a path graph.

\vspace{0.8mm}
\subsubsection{Inadmissibility of high-order leader-follower consensus}
High-order leader-follower consensus on the form~\eqref{eq:closedloopred} in undirected networks will always be inadmissible under the given assumptions. This was also observed in~\cite{Swaroop2006}. 

We first require the following Lemma:
\begin{lemma}
\label{lem:leaderfollowerlemma}
Consider the grounded Laplacian matrix~$\bar{L}$ of an undirected graph~$\mathcal{G}$. 
Let Assumption~\ref{ass:q} hold.
The smallest eigenvalue~$\bar{\lambda}_1$ of~$\bar{L}$ then satisfies
\begin{equation}
\label{eq:lambda1cond}
\bar{\lambda}_1 \le \frac{q}{N-1}w_{\max},
\end{equation}
where $w_{\max}$ is the largest edge weight in $\mathcal{E}$.
\end{lemma}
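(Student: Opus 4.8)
\textbf{Proof plan for Lemma~\ref{lem:leaderfollowerlemma}.}

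The plan is to bound the smallest eigenvalue $\bar{\lambda}_1$ of the symmetric positive definite matrix $\bar{L}$ via the Rayleigh quotient, by exhibiting a suitable test vector. Since $\bar{L}$ is symmetric, the standard variational characterization gives
\[
\bar{\lambda}_1 = \min_{v \neq 0} \frac{v^T \bar{L} v}{v^T v},
\]
so any nonzero choice of $v$ yields an upper bound on $\bar{\lambda}_1$. The natural candidate is $v = \mathbf{1}$, the all-ones vector in $\mathbb{R}^{N-1}$, for which $v^T v = N-1$. The numerator $\mathbf{1}^T \bar{L} \mathbf{1}$ then needs to be bounded above by $q\,w_{\max}$.

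First I would recall that $\bar{L}$ is obtained from the full (symmetric) Laplacian $L$ by deleting the row and column of the leader node (node~1). Writing $L\mathbf{1}_N = 0$ for the all-ones vector $\mathbf{1}_N \in \mathbb{R}^N$ and partitioning accordingly, the quantity $\mathbf{1}_{N-1}^T \bar{L}\, \mathbf{1}_{N-1}$ equals the sum of all entries of $L$ restricted to the non-leader block. Because each row of $L$ sums to zero, the row sum of the $\bar{L}$-block for follower node~$i$ equals $\sum_{j \in \mathcal{N}_i,\, j=1} w_{ij}$, i.e.\ the total weight of edges from $i$ to the leader; this is zero unless $i$ is a neighbor of the leader. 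Hence $\mathbf{1}^T\bar{L}\mathbf{1}$ equals the sum of weights of all edges incident to the leader node, which, by Assumption~\ref{ass:q}, involves at most $q$ edges each of weight at most $w_{\max}$, giving $\mathbf{1}^T\bar{L}\mathbf{1} \le q\, w_{\max}$. Combining with the Rayleigh bound gives $\bar{\lambda}_1 \le \frac{q\,w_{\max}}{N-1}$, as claimed.

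The main thing to be careful about — rather than a deep obstacle — is the bookkeeping of which edges contribute to $\mathbf{1}^T\bar{L}\mathbf{1}$: one must correctly identify that the only surviving contributions come from edges between the leader and its neighbors (the internal follower–follower edges cancel because of the zero row-sum property), and then invoke Assumption~\ref{ass:q} to cap the leader's degree by $q$. An alternative, equivalent route is to note that $\bar{L} = L_{\text{followers}} + \mathrm{diag}(\text{weights to leader})$ where $L_{\text{followers}}$ is itself a Laplacian (hence $\mathbf{1}^T L_{\text{followers}} \mathbf{1} = 0$), so that $\mathbf{1}^T \bar{L}\mathbf{1}$ is exactly the sum of the leader-incident weights; this makes the counting transparent. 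Either way the estimate follows directly and no genuine difficulty arises beyond this identification.
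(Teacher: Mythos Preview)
Your proposal is correct and matches the paper's own proof essentially line for line: the paper also invokes the Rayleigh--Ritz theorem with the test vector $\mathbf{1}_{N-1}$, identifies $\mathbf{1}_{N-1}^T\bar{L}\,\mathbf{1}_{N-1}$ as the sum $\sum_{k\in\mathcal{N}_1} w_{1k}$ of leader-incident edge weights via the zero row-sum property of $L$, and then bounds this by $q\,w_{\max}$ using Assumption~\ref{ass:q}.
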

\begin{proof}
By the Rayleigh-Ritz theorem~\cite[Theorem 4.2.2]{HornJohnson} it holds
\(
\bar{\lambda}_1 \le  \frac{v^T\bar{L}v}{v^Tv} ,~~ \forall v \in \mathbb{C}^{N-1} \backslash \{0\}. \)
This implies in particular that 
\[ \bar{\lambda}_1 \le  \frac{\mathbf{1}_{N-1}^T\bar{L}\mathbf{1}_{N-1}}{\mathbf{1}_{N-1}^T\mathbf{1}_{N-1}} = \frac{\sum_{k \in \mathcal{N}_1} w_{1k}}{N-1} \le \frac{q w_{\max}}{N-1},\]
where $\mathbf{1}_{N-1}^T\bar{L}\mathbf{1}_{N-1} = \sum_{k \in \mathcal{N}_1} w_{1k}$ is the weight sum of all edges connected to the leader node 1.  This is true since each row~$k$ of~$\bar{L}$ sums to zero if node~$k$ has no connection to the node 1, and otherwise to~$w_{1k}\le w_{\max}$.
\end{proof}

\vspace{0.8mm}
\begin{theorem}
\label{thm:leaderfollower}
Assume that the graph~$\mathcal{G}$ is undirected. The leader-follower consensus algorithm represented in~\eqref{eq:closedloopred} is inadmissible for $n \ge 3$ under Assumptions~\ref{ass:q}--\ref{ass:fixed}.
\end{theorem}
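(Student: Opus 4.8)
The plan is to reduce Theorem~\ref{thm:leaderfollower} to the same Routh--Hurwitz obstruction used in the proof of Theorem~\ref{thm:main}, but now applied to the reduced (grounded) system matrix $\bar{\mathcal{A}}$ in~\eqref{eq:closedloopred}. Since $\mathcal{G}$ is undirected, $L$ is symmetric, and hence so is the grounded Laplacian $\bar{L}$ obtained by deleting the first row and column. Thus $\bar{L}$ is orthogonally diagonalizable with real eigenvalues $0 < \bar\lambda_1 \le \bar\lambda_2 \le \cdots \le \bar\lambda_{N-1}$, all strictly positive by the result of~\cite{Xia2016} cited after~\eqref{eq:closedloopred}. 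Conjugating $\bar{\mathcal{A}}$ by $\mathrm{diag}\{V,\ldots,V\}$, where $V$ orthogonally diagonalizes $\bar{L}$, block-diagonalizes $\bar{\mathcal{A}}$ into $N-1$ companion-type blocks $\bar{\mathcal{A}}_l$ whose characteristic polynomials are exactly of the form~\eqref{eq:charpoly} with $\lambda_l$ replaced by the real number $\bar\lambda_l$. Admissibility requires that \emph{every} such block be Hurwitz for \emph{every} $N$.

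The key steps, in order, are: (i) record that $\bar{L}$ symmetric positive definite gives the block-diagonalization and real positive eigenvalues $\bar\lambda_l$; (ii) apply the standard (real-coefficient) Routh--Hurwitz criterion to $p_l(s) = s^n + a_{n-1}\bar\lambda_l s^{n-1} + \cdots + a_1\bar\lambda_l s + a_0\bar\lambda_l$, which — as noted in the first Remark after the proof of Theorem~\ref{thm:main} — yields among its necessary conditions the inequality $a_{n-1}a_{n-2}\bar\lambda_l - a_{n-3} > 0$ for each $l$; (iii) specialize this to the smallest eigenvalue $\bar\lambda_1$, so that a necessary condition for stability is $a_{n-1}a_{n-2}\bar\lambda_1 > a_{n-3}$; (iv) invoke Lemma~\ref{lem:leaderfollowerlemma}, which gives $\bar\lambda_1 \le \frac{q}{N-1}w_{\max}$, so the left-hand side is bounded above by $a_{n-1}a_{n-2}w_{\max}\,q/(N-1)$; (v) observe that under Assumptions~\ref{ass:q}--\ref{ass:fixed} the quantities $a_{n-1}, a_{n-2}, a_{n-3}, w_{\max}, q$ are all fixed and independent of $N$, so for $N$ large enough the bound falls below $a_{n-3}$ (assuming $a_{n-3}>0$; the degenerate case $a_{n-3}=0$ would have to be handled separately, e.g.\ by noting it already forces a non-Hurwitz polynomial or by moving to the next criterion), and hence the necessary condition fails; (vi) conclude via Theorem~\ref{thm:consensus} (extended to $n>3$) that the closed loop fails to reach consensus for all sufficiently large $N$, i.e.\ the algorithm is inadmissible.

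There is essentially no hard step here: the result is a corollary of the machinery already assembled. The one point requiring a little care — and the closest thing to an obstacle — is making sure the Routh--Hurwitz condition used is genuinely \emph{necessary} for Hurwitz stability (not just one of a sufficient set), and that $\bar\lambda_1$ rather than $\bar\lambda_2$ is the relevant eigenvalue, since $\bar L$ has no zero eigenvalue (unlike $L$); this is precisely why Lemma~\ref{lem:leaderfollowerlemma} is stated for $\bar\lambda_1$. A secondary subtlety is the edge case where one of the relevant gains $a_{n-1}, a_{n-2}, a_{n-3}$ vanishes, for which one should remark that a zero gain either makes the criterion fail trivially or shifts the argument to the next Hurwitz determinant, but in either case localized feedback cannot rescue stability as $N\to\infty$. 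I would write the proof in three or four sentences, essentially: block-diagonalize, quote the real Routh--Hurwitz necessary condition, plug in Lemma~\ref{lem:leaderfollowerlemma}, and let $N\to\infty$.
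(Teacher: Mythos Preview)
Your proposal is correct and follows essentially the same approach as the paper: block-diagonalize $\bar{\mathcal{A}}$ into $N-1$ companion blocks with real characteristic polynomials, invoke the real Routh--Hurwitz necessary condition $a_{n-1}a_{n-2}\bar\lambda_l - a_{n-3} > 0$, and then use Lemma~\ref{lem:leaderfollowerlemma} to force its failure for large $N$. The paper's proof is terser and does not discuss the degenerate-gain edge cases you flag, but the argument is otherwise identical.
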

\begin{proof}
The arguments in the  proof of Theorem~\ref{thm:main} apply. In this case,  
$N-1$ real-valued characteristic polynomials like~\eqref{eq:charpoly} are obtained. 
The condition~\eqref{eq:RHcriterion} reduces to
\vspace{-1mm}
\begin{equation}
\label{eq:reducedcond}
a_{n-1}a_{n-2}\bar{\lambda}_l - a_{n-3} >0, \vspace{-0.8mm}
\end{equation}
for $l = 1,\ldots,N-1$. 
Using Lemma~\ref{lem:leaderfollowerlemma}, we see that~\eqref{eq:reducedcond} requires
$a_{n-1}a_{n-2} > \frac{1}{q w_{\max}}a_{n-3}(N-1)$, which cannot stay satisfied for large~$N$. The algorithm is thus inadmissible. 
%
%
%
%
\end{proof}

\subsection{Affected classes of graphs}
\label{sec:graphs}
The inadmissibility of high-order consensus applies to any network whose underlying graph is such that $\rel$ is decreasing towards zero as~$N$ increases. The second-smallest Laplacian eigenvalue $\lambda_2$ of an undirected graph is real-valued and known as the \emph{algebraic connectivity} of the graph. While the correct generalization of algebraic connectivity to directed graphs is not clear-cut, we know the following:
\begin{lemma}
\label{lem:mirrorlemma}
If~$L$ is normal then
\[\rel = \lambda_2^s,\]
where $\lambda_2^s$ is the smallest non-zero eigenvalue of $L^s = (L + L^T)/2$, that is, the symmetric part of~$L$. 
\end{lemma}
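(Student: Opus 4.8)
The plan is to exploit the spectral decomposition of a normal matrix together with the standard fact that a normal matrix and its symmetric part share a common unitary eigenbasis. Since $L$ is normal, it is unitarily diagonalizable: there is a unitary $U$ with $U^*LU = \Lambda = \mathrm{diag}\{0,\lambda_2,\ldots,\lambda_N\}$. Because $L$ is normal, $L^T = L^*$ commutes with $L$ (indeed $L^T$ is a polynomial in $L$ via the spectral theorem, or one checks directly that $U$ simultaneously diagonalizes $L$ and $L^*$, giving $U^*L^TU = \bar{\Lambda}$). Hence $U$ also diagonalizes $L^s = (L+L^T)/2$, and $U^*L^sU = (\Lambda + \bar\Lambda)/2 = \mathrm{diag}\{\mathrm{Re}\{\lambda_1\},\mathrm{Re}\{\lambda_2\},\ldots,\mathrm{Re}\{\lambda_N\}\}$. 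So the eigenvalues of $L^s$ are exactly the real parts of the eigenvalues of $L$.

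From there the argument is essentially bookkeeping about the ordering. We have $\lambda_1 = 0$, so $\mathrm{Re}\{\lambda_1\} = 0$ is an eigenvalue of $L^s$, and it is the smallest one since $\mathrm{Re}\{\lambda_l\} \ge \mathrm{Re}\{\lambda_2\} > 0$ for $l \ge 2$ (this ordering is fixed in the problem setup, where $0 = \lambda_1 < \mathrm{Re}\{\lambda_2\} \le \cdots \le \mathrm{Re}\{\lambda_N\}$). One should note that $L^s$ is real symmetric, hence has real eigenvalues, and is positive semidefinite with a single zero eigenvalue under the connected-spanning-tree assumption; this makes "smallest non-zero eigenvalue of $L^s$" well defined. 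The zero eigenvalue of $L^s$ corresponds to $\mathbf{1}$, consistent with $L^s\mathbf{1} = 0$ for a balanced graph (normality of $L$ forces $\mathcal{G}$ balanced, as already noted in the setup). Therefore the smallest non-zero eigenvalue $\lambda_2^s$ of $L^s$ equals $\min_{l \ge 2}\mathrm{Re}\{\lambda_l\} = \mathrm{Re}\{\lambda_2\}$, which is the claim.

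I would also remark briefly on why the real parts appear in sorted order: the map $\lambda_l \mapsto \mathrm{Re}\{\lambda_l\}$ is monotone on the list as indexed, so no reshuffling is needed to match the $l=2$ position; this is purely a consequence of the chosen numbering convention for the $\lambda_l$. The only mild subtlety is the step that normality of $L$ implies $L$ and $L^T$ are simultaneously unitarily diagonalizable with conjugate eigenvalues on the diagonal --- equivalently, that $U^*L^TU = \overline{U^*LU}$. This follows because $(U^*LU)^* = U^*L^*U = U^*L^TU$ (the last equality since $L$ is real), and $(U^*LU)^* = \Lambda^* = \bar\Lambda$.

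The main obstacle, such as it is, is entirely conceptual rather than technical: making sure the "smallest non-zero" qualifier is handled correctly on both sides, i.e., that the zero eigenvalue of $L$ maps to the zero eigenvalue of $L^s$ and that no spurious additional zeros of $L^s$ appear (which is guaranteed by the simple zero eigenvalue / connected-spanning-tree hypothesis carried throughout the paper, since $\mathrm{rank}(L^s) = \mathrm{rank}(\Lambda + \bar\Lambda) = N-1$ when only $\lambda_1 = 0$). Everything else is a direct application of the spectral theorem for normal matrices. I expect the full proof to be only a few lines.
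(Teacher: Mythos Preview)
Your argument is correct. The key observation---that for a real normal matrix $L$ the unitary $U$ diagonalizing $L$ also diagonalizes $L^T=L^*$ with conjugate diagonal, so that $L^s$ has spectrum $\{\mathrm{Re}\{\lambda_l\}\}$---is exactly the right mechanism, and your handling of the ordering and of the uniqueness of the zero eigenvalue is clean.

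As for comparison with the paper: the paper does not actually prove this lemma in-text; it simply defers to a reference (Lemma~9.1.2 in a cited thesis). Your self-contained spectral-theorem argument is therefore strictly more informative than what appears in the paper, and is the standard way one would establish the result from first principles. There is no methodological divergence to discuss beyond the fact that you supply a proof where the paper supplies a citation.
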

\begin{proof}
See Lemma~9.1.2 in \cite{GuoThesis}.
\end{proof}
The matrix~$L^s$ is the graph Laplacian corresponding to the \emph{mirror graph} $\hat{\mathcal{G}}$ of $\mathcal{G}$, which is the \emph{undirected} graph obtained as $\hat{\mathcal{G}} = \{\mathcal{V}, \mathcal{E} \cup \hat{\mathcal{E}}\}$, where $\hat{\mathcal{E}}$ is the set of all edges in $\mathcal{E}$, but \emph{reversed}, and whose edge weights are $\hat{w}_{ij} = \hat{w}_{ij} = (w_{ij}+w_{ji})/2$ \cite{OlfatiSaber2004}. Clearly, the mirror graph of an undirected graph is the graph itself. 
Lemma~\ref{lem:mirrorlemma} implies that $\rel$ of $\mathcal{G}$ is the algebraic connectivity of its mirror graph $\mathcal{\hat{G}}$. 

We introduce the following terminology:
\begin{definition}
The graph~$\mathcal{G}$ is said to have \emph{decreasing algebraic connectivity} if for its mirror graph~$\hat{\mathcal{G}}$, the algebraic connectivity $\lambda_2 \rightarrow 0$ as $N \rightarrow \infty$. 
\end{definition}

This means that Theorem~\ref{thm:main} will apply to graphs with decreasing algebraic connectivity, and it suffices to identify this property in undirected graphs. We next give a (non-exhaustive) account of classes of graphs with this property.

\vspace{0.5mm}
\subsubsection{Lattices, fuzzes, and their embedded graphs}
Consider the $d$-dimensional toric lattice~$\mathbb{Z}_M^d$ with $N = M^d$ nodes, and let each node be connected to its $r$ neighbors in each direction (letting $q = 2rd$). Such a lattice is called an $r$-fuzz.
\begin{lemma}[Algebraic connectivity of $r$-fuzz]
\label{lem:fuzz}
In the $r$-fuzz lattice of $d$ dimensions
\begin{equation}
\label{eq:lambda2fuzz}
\lambda_2 \le \frac{c}{N^{2/d}},
\end{equation}
where $c$ is a constant that depends on the fixed parameters~$r$ (that is, $q$), $w_{\max}$ and~$d$, but not on $N$.
\end{lemma}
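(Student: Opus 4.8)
The plan is to exhibit an explicit test vector and invoke the Rayleigh--Ritz (Courant--Fischer) characterization of $\lambda_2$, exactly as in the proof of Lemma~\ref{lem:leaderfollowerlemma}, but now for the full (ungrounded) Laplacian of the $r$-fuzz. Since this Laplacian is symmetric and positive semidefinite with $\lambda_1 = 0$ having eigenvector $\mathbf{1}_N$, we have $\lambda_2 = \min_{v \perp \mathbf{1}_N,\, v \neq 0} \frac{v^T L v}{v^T v}$, and the quadratic form is the weighted edge-sum $v^T L v = \sum_{(i,j)\in\mathcal E} w_{ij}(v_i - v_j)^2$. So it suffices to construct a single vector $v$ orthogonal to $\mathbf{1}_N$ whose Rayleigh quotient is $O(N^{-2/d})$.

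First I would use the product structure of the torus $\mathbb{Z}_M^d$ with $N = M^d$. A natural choice is a low-frequency Fourier mode along one coordinate axis: index nodes by $x = (x_1,\ldots,x_d) \in \{0,\ldots,M-1\}^d$ and set $v_x = \cos\!\big(\tfrac{2\pi x_1}{M}\big)$ (or, to avoid trigonometry, a piecewise-linear "tent" profile in $x_1$ that goes up then down so as to sum to zero). Such a $v$ is automatically orthogonal to $\mathbf 1_N$ by periodicity/symmetry. Then I would estimate the numerator: each edge of the $r$-fuzz connects nodes whose coordinates differ by at most $r$ in a single direction, so $|v_i - v_j| \le$ (Lipschitz constant of the profile) $\times r \le C r / M$ for a mode of wavelength $M$; squaring gives $(v_i-v_j)^2 \le C^2 r^2/M^2$. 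There are at most $\tfrac12 q N = rdN$ edges, each of weight at most $w_{\max}$, so $v^T L v \le rdN \cdot w_{\max} \cdot C^2 r^2 / M^2$. For the denominator, $v^T v = \Theta(N)$ (for the cosine mode it is exactly $N/2$; for the tent it is a fixed fraction of $N$). Dividing, the $N$ cancels and $M^2 = N^{2/d}$ gives $\lambda_2 \le c / N^{2/d}$ with $c$ depending only on $r$ (hence $q$), $w_{\max}$, and $d$ — precisely the claimed form. Finally I would remark that the bound for subgraphs and embedded graphs follows because deleting edges only decreases the quadratic form (monotonicity of the Rayleigh quotient numerator), while the relevant dimension parameter is inherited; this matches the "and their embedded graphs" in the subsubsection heading.

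The only mildly delicate point — the main obstacle, such as it is — is making the edge-difference bound $|v_i - v_j| \le Cr/M$ fully rigorous for the specific profile chosen, including edges that "wrap around" the torus, and confirming the test vector is genuinely orthogonal to $\mathbf 1_N$ and nonzero. Using the discrete cosine mode sidesteps the wrap-around issue cleanly (it is a global eigenfunction of the one-dimensional cycle Laplacian, so one can even compute $v^T L v$ exactly rather than bound it), so I would present that version, keeping the tent-function alternative only as an aside. Everything else is the routine counting already displayed in Lemma~\ref{lem:leaderfollowerlemma}'s proof.
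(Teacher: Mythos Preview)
Your argument is correct. The paper itself gives no proof here, deferring entirely to the derivations in~\cite{Tegling2017b}; your explicit Rayleigh-quotient bound with the test vector $v_x = \cos(2\pi x_1/M)$ is the standard construction and almost certainly coincides with that reference, since on the torus with shift-invariant weights this Fourier mode is in fact an exact Laplacian eigenvector (so for uniform weights the inequality can even be made an equality). One minor sharpening: because $v$ depends only on $x_1$, edges in the remaining $d-1$ coordinate directions contribute zero to $v^T L v$, so your edge count $rdN$ can be replaced by $rN$. Your monotonicity remark for subgraphs is precisely the content of the lemma that immediately follows Lemma~\ref{lem:fuzz} in the paper.
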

\begin{proof}
Follows the derivations in~\cite{Tegling2017b}. 
\end{proof}
The bound~\eqref{eq:lambda2fuzz} also holds for any subgraph of the $r$-fuzz lattice, that is, any graph that is \emph{embeddable} in the lattice. This follows from the following important lemma:
\begin{lemma}
Adding an edge to a graph or increasing the weight of an edge increases (or leaves unchanged)~$\lambda_2$ of the corresponding graph Laplacian, and vice versa.
\end{lemma}
\begin{proof}
See~\cite[Theorem 3.2]{Mohar1991} for addition of an edge. 

Increasing the weight of an edge $(i',j')$ by $\Delta w$ means that the new graph Laplacian can be written $L' = L + \Delta L$, where $\Delta L$ is also a positive semidefinite graph Laplacian (of a disconnected graph). By~\cite[Theorem 2.8.1]{BrouwerBook} this implies that $\lambda_l' \ge \lambda_l$ for each $l = 1,\ldots,N$, and in particular~$\lambda_2' \ge \lambda_2$.
\end{proof}

\vspace{0.5mm}
\subsubsection{Planar graphs}
Planar graphs are embeddable in two-dimensional lattices so Lemma~\ref{lem:fuzz} applies. For this important case, however, a more precise bound is available:
\begin{lemma}[Algebraic connectivity of planar graphs]
For a planar graph,
\vspace{-1mm}
\begin{equation}
\lambda_2 \le \frac{8qw_{\max}}{N},
\end{equation}
\end{lemma}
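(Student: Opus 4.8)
The plan is to recognize this as the Spielman--Teng spectral bound for planar graphs and to supply its circle-packing proof, adapted to weights. First I would reduce the weights to an effective maximum degree: under Assumptions~\ref{ass:q}--\ref{ass:finite} every node of $\mathcal{G}$ has weighted degree $d_i = \sum_{j\in\mathcal{N}_i} w_{ij}\le q\,w_{\max}$, so it suffices to exhibit a test vector $v\perp\mathbf{1}$ with $v^\top L v/\,v^\top v \le 8\,q\,w_{\max}/N$. If one is willing to cite the literature, Spielman and Teng's bound $\lambda_2\le 8\,d_{\max}/N$ for planar graphs applies directly with $d_{\max}\le q w_{\max}$; what follows is how I would reconstruct it.

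The geometric input is Koebe's circle packing theorem: every planar graph is a subgraph of the tangency graph of a packing of disks in the plane with pairwise disjoint interiors. Composing with inverse stereographic projection turns this into a packing of spherical caps on the unit sphere $S^2\subset\mathbb{R}^3$, with disjoint interiors and with caps $i,j$ tangent whenever $(i,j)\in\mathcal{E}$; write $p_i\in S^2$ for the center (pole) of cap $i$ and $r_i$ for its chordal radius, so that $\|p_i\|=1$, the tangency of caps $i,j$ forces $\|p_i-p_j\|\le r_i+r_j$, and the cap area equals $\pi r_i^2$. The crucial normalization step is to precompose with a M\"obius automorphism of $S^2$ after which the poles are \emph{balanced}, $\sum_{i=1}^N p_i=0$.

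Granting the balancing, take the three coordinate functions $v^{(k)}\in\mathbb{R}^N$, $k=1,2,3$, with $v^{(k)}_i=(p_i)_k$. Balancing gives $\mathbf{1}^\top v^{(k)}=0$, so each $v^{(k)}$ is admissible in the variational characterization of $\lambda_2$, and by the elementary inequality $\min_k(\alpha_k/\beta_k)\le(\sum_k\alpha_k)/(\sum_k\beta_k)$ for $\beta_k>0$,
\[
\lambda_2\;\le\;\frac{\sum_{k=1}^3 (v^{(k)})^\top L\,v^{(k)}}{\sum_{k=1}^3\|v^{(k)}\|^2}\;=\;\frac{\sum_{(i,j)\in\mathcal{E}}w_{ij}\,\|p_i-p_j\|^2}{\sum_{i=1}^N\|p_i\|^2}.
\]
The denominator is exactly $N$. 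For the numerator, $\|p_i-p_j\|^2\le(r_i+r_j)^2\le 2(r_i^2+r_j^2)$, hence $\sum_{(i,j)\in\mathcal{E}}w_{ij}\|p_i-p_j\|^2\le 2\sum_i d_i\,r_i^2\le 2\,q\,w_{\max}\sum_i r_i^2$, and disjointness of the caps on the area-$4\pi$ sphere gives $\pi\sum_i r_i^2\le 4\pi$, i.e. $\sum_i r_i^2\le 4$. Thus the numerator is at most $8\,q\,w_{\max}$ and $\lambda_2\le 8\,q\,w_{\max}/N$, with the constant $8=2\cdot4$ appearing without slack.

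The main obstacle is the balancing step: in contrast to the rest, which is routine Rayleigh-quotient bookkeeping, the existence of a M\"obius normalization with $\sum_i p_i=0$ is non-constructive and rests on a Brouwer/degree-theoretic fixed-point argument, and it is here that planarity is genuinely used beyond Euler's formula. A minor point of care is non-simple, non-$3$-connected, or disconnected $\mathcal{G}$: disconnected graphs have $\lambda_2=0$ and are trivial, while for the rest it is enough that the circle packing exists with $\mathcal{G}$ a subgraph of the tangency graph, since only the disjointness of the disks and the tangencies along actual edges of $\mathcal{G}$ are used in the estimate.
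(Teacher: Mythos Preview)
Your proposal is correct and in fact coincides with the paper's approach: the paper simply cites Spielman and Teng's bound $\lambda_2\le 8 d_{\max}/N$ for planar graphs and applies it with $d_{\max}\le q\,w_{\max}$, which is exactly the reduction you make in your first paragraph. You go further than the paper by reconstructing the circle-packing argument behind that citation; the reconstruction is sound (the chordal-radius convention makes the cap-area identity $\pi r_i^2$ exact and the tangency inequality $\|p_i-p_j\|\le r_i+r_j$ follows from subadditivity of $\sin$ on $[0,\pi]$), and your remarks on the M\"obius balancing step and on non-simple/disconnected graphs are accurate.
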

\vspace{1mm}
\begin{proof}
See \cite[Theorem 6]{Spielman2007}.
\end{proof}

\vspace{0.5mm}
\subsubsection{Constant-genus graphs}
Planar graphs can be generalized to graphs with constant \emph{genus}. The genus of a planar graph is $g =0$. Higher genus implies that the graph can be drawn on a surface with $g$ handles (or ``holes'') without any one edge crossing another. For example, a torus would correspond to $g =1$ and a pretzel shape to $g = 3$. 
\begin{lemma}[Alg. connectivity of constant-genus graphs]
Let $\mathcal{G}$ have constant and bounded genus $g$. Then 
\vspace{-1mm}
\[\lambda_2  \le  \frac{c_2}{N} , \]\vspace{-1mm}
where $c_2$ is a constant that depends on $q$, $g$ and $w_{\max}$, but not on~$N$.
\end{lemma}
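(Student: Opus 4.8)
The plan is to reduce the constant-genus case to the known spectral estimate for such graphs via essentially the same Rayleigh-quotient / geometric separator argument that underlies the planar bound in the previous lemma. The statement to prove is that a graph $\mathcal{G}$ of constant bounded genus $g$ has $\lambda_2 \le c_2/N$, where $c_2$ depends only on $q$, $g$ and $w_{\max}$. First I would recall that, just as in Spielman--Teng's treatment of the planar case (cited as \cite{Spielman2007}), a bounded-genus graph on $N$ vertices admits a small \emph{vertex separator}: there is a set $S$ of $O(\sqrt{g\,N})$ vertices whose removal disconnects $\mathcal{G}$ into pieces each of size at most $\tfrac{2}{3}N$. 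This separator theorem for bounded-genus graphs is classical (Gilbert--Hutchinson--Tarjan); for constant $g$ the separator has size $O(\sqrt{N})$, with the hidden constant depending only on $g$.

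Given such a separator, I would build a test vector $v \in \mathbb{R}^N$ orthogonal to $\mathbf{1}$ and use the Rayleigh--Ritz characterization (as in the proof of Lemma~\ref{lem:leaderfollowerlemma}) to bound $\lambda_2 \le v^T L v / v^T v$. The natural choice is a $\{-1,0,+1\}$-valued (or slightly balanced) indicator: set $v$ to $+1$ on one union of pieces, $-1$ on a complementary union chosen so the two sides are roughly balanced in size, and $0$ on the separator $S$. Then $v^T L v = \sum_{(i,j)\in\mathcal{E}} w_{ij}(v_i - v_j)^2$ only picks up edges with at least one endpoint in $S$ (since within a single side $v$ is constant); by Assumption~\ref{ass:q} each such vertex has at most $q$ incident edges of weight at most $w_{\max}$, so $v^T L v \le |S|\, q\, w_{\max} = O(q\,w_{\max}\sqrt{N})$. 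Meanwhile $v^T v = N - |S| \ge N/2$ for $N$ large, and the balancing ensures $v \perp \mathbf{1}$ (or one adds a small constant shift, changing $v^Tv$ only negligibly). Dividing gives $\lambda_2 = O\!\big(q\,w_{\max}/\sqrt{N}\big)$, which already decays — but to get the sharper $c_2/N$ one iterates: apply the separator recursively to each large piece, as in the planar proof, so that the test function varies across a nested family of separators of geometrically shrinking sizes; the total Dirichlet energy then telescopes to $O(q\,w_{\max})$ while $v^Tv$ stays $\Theta(N)$, yielding $\lambda_2 \le c_2/N$ with $c_2$ a function of $q$, $g$, $w_{\max}$ only.

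The main obstacle is making the recursive balancing rigorous: one must partition the recursion tree of sub-pieces so that the final $\pm$ assignment is (nearly) orthogonal to $\mathbf{1}$ and, simultaneously, so that each vertex is ``charged'' to only $O(1)$ separators (otherwise the energy sum does not telescope). This is exactly the bookkeeping carried out in \cite{Spielman2007} for planar graphs; the only genuinely new ingredient here is invoking the bounded-genus separator theorem in place of the planar one, and tracking that its constant depends on $g$. A cleaner alternative, which I would actually use to keep the proof short, is to cite the separator theorem for bounded-genus graphs directly and then quote the generic fact (also essentially in \cite{Spielman2007}) that a graph family closed under subgraphs and admitting $O(\sqrt{N})$ separators with bounded-degree vertices has $\lambda_2 = O(1/N)$ — so the proof reduces to one line: ``Bounded-genus graphs have $O_g(\sqrt N)$-separators (Gilbert--Hutchinson--Tarjan), so the argument of \cite[Theorem 6]{Spielman2007} applies verbatim, with the constant now also depending on $g$.'' I would present it at that level of detail rather than reproducing the recursion.
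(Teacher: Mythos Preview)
The paper's own proof is a one-line citation to \cite[Theorem~2.3]{Kelner2006}; no argument is reproduced. Kelner's proof, like Spielman and Teng's planar proof it extends, is \emph{not} separator-based: it constructs a circle packing of the graph on a compact Riemann surface of genus~$g$ (the Koebe--Andreev--Thurston theorem for $g=0$, its higher-genus analogue otherwise) and then runs a centerpoint/stereographic-projection argument to produce a smooth test vector witnessing $\lambda_2 \le c_2/N$. In \cite{Spielman2007} the separator theorem is a \emph{consequence} of the spectral bound (via a Cheeger/Mihail step), not an input to it. So your proposed one-line reduction---``substitute the Gilbert--Hutchinson--Tarjan separator theorem into the argument of \cite[Theorem~6]{Spielman2007}''---fails at the outset: there is no separator-recursion argument in that proof to transplant.

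Your longer self-contained sketch then has to stand on its own, and there the gap is the jump from $O(1/\sqrt{N})$ to $O(1/N)$. A single balanced $O(\sqrt{N})$ vertex separator together with a $\{-1,0,+1\}$ test vector gives only $\lambda_2 = O(q\,w_{\max}/\sqrt{N})$, as you correctly compute; this is just the easy (Cheeger) direction. You then assert that iterating separators makes the Dirichlet energy ``telescope to $O(q\,w_{\max})$'' while $\|v\|^2$ stays $\Theta(N)$, but no mechanism is supplied, and a direct level-by-level accounting of piecewise-constant refinements across nested $O(\sqrt{\,\cdot\,})$ separators does not produce the claimed cancellation. Recovering that missing $\sqrt{N}$ factor is exactly the nontrivial content of the Spielman--Teng and Kelner results, and in both cases it comes from the geometric embedding producing a genuinely smooth test vector---something a $\pm 1$ separator-indicator construction does not give you.
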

\begin{proof}
See \cite[Theorem 2.3]{Kelner2006}.
\end{proof}

\vspace{0.5mm}
\subsubsection{Tree graphs with growing diameter}
The \emph{diameter} $\mathrm{diam}\{\mathcal{G}\}$ of the graph~$\mathcal{G}$ is defined as the longest distance between any pair of its nodes. For tree graphs where the diameter is growing, we can state the following lemma:
%
\begin{lemma}[Algebraic connectivity of tree graphs]
Let $\mathcal{G}$ be a tree graph. Then
\vspace{-1mm}
\begin{equation}
\label{eq:tree}
\lambda_2 \le \frac{\pi^2 w_{\max}}{(\mathrm{diam}\{\mathcal{G}\} +1 )^2},
\end{equation}\vspace{-1mm}
and if $\mathrm{diam}\{\mathcal{G}\} \rightarrow \infty $ as  $N \rightarrow \infty$, then $\lambda_2 \rightarrow 0$.
\end{lemma}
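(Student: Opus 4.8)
The plan is to upper-bound $\lambda_2$ with a single well-chosen test vector in the Rayleigh quotient, reducing the estimate to the known spectrum of a path graph. Fix vertices $a,b$ with $d(a,b)=D:=\mathrm{diam}\{\mathcal{G}\}$ and let $v_0=a,v_1,\dots,v_D=b$ be the (unique, since $\mathcal{G}$ is a tree) path joining them. Deleting the edges of this path leaves a forest whose components each contain exactly one $v_k$; for $u\in\mathcal{V}$ let $\rho(u)\in\{0,\dots,D\}$ be the index of the $v_k$ lying in $u$'s component, and let $m_k\ge 1$ be the size of that component. I would take $f(u)=\psi(\rho(u))$, where $\psi(k)=\cos\!\big(\tfrac{(2k+1)\pi}{2(D+1)}\big)$, $k=0,\dots,D$, is the eigenvector for the smallest nonzero eigenvalue $2\big(1-\cos\tfrac{\pi}{D+1}\big)$ of the unweighted Laplacian of the path on $D+1$ nodes; recall the elementary identities $\sum_{k=0}^{D}\psi(k)=0$, $\sum_{k=0}^{D}\psi(k)^2=\tfrac{D+1}{2}$, and $\sum_{k=0}^{D-1}(\psi(k)-\psi(k+1))^2=2\big(1-\cos\tfrac{\pi}{D+1}\big)\tfrac{D+1}{2}$.

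Next I would compute the two quadratic forms. Since $\mathcal{G}$ is undirected, $f^{T}Lf=\sum_{\{i,j\}}w_{ij}(f(i)-f(j))^2$, the sum over undirected edges. Any edge inside a component, or joining a component to its anchor $v_k$, contributes $0$ because $\rho$ (hence $f$) is constant on it; only the $D$ path edges survive, so $f^{T}Lf\le w_{\max}\sum_{k=0}^{D-1}(\psi(k)-\psi(k+1))^2$. For the denominator I would use $L\mathbf{1}=0$: writing $\bar f=\tfrac1N\sum_u f(u)$, Courant--Fischer gives $\lambda_2\le f^{T}Lf/\|f-\bar f\mathbf{1}\|^2$, and $\|f-\bar f\mathbf{1}\|^2=\min_c\sum_{k=0}^{D}m_k(\psi(k)-c)^2\ge\min_c\sum_{k=0}^{D}(\psi(k)-c)^2=\sum_{k=0}^{D}\psi(k)^2=\tfrac{D+1}{2}$, the inequality because every $m_k\ge 1$ and the last equality because $\sum_k\psi(k)=0$.

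Combining the two estimates, the common factor $\tfrac{D+1}{2}$ cancels and $\lambda_2\le 2w_{\max}\big(1-\cos\tfrac{\pi}{D+1}\big)=4w_{\max}\sin^2\!\big(\tfrac{\pi}{2(D+1)}\big)\le\tfrac{\pi^2 w_{\max}}{(D+1)^2}$, using $\sin x\le x$. The last assertion of the lemma is then immediate: if $\mathrm{diam}\{\mathcal{G}\}\to\infty$ as $N\to\infty$, the right-hand side vanishes, so $\lambda_2\to 0$.

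I expect the only delicate point to be the denominator bound. One has to rule out the possibility that a large subtree hanging off a single path vertex $v_k$ inflates the mass $m_k$ and thereby collapses $\|f-c\mathbf{1}\|^2$; this is exactly prevented by the termwise inequality $\min_c\sum_k m_k(\psi(k)-c)^2\ge\min_c\sum_k(\psi(k)-c)^2$, valid since $m_k\ge 1$. The remaining ingredients — the uniqueness of the diameter path and the fact that its complement in a tree splits into subtrees anchored at distinct $v_k$ (so $\rho$ is well defined and $f$ is non-constant for $D\ge 1$), together with the trigonometric identities for $\psi$ — are routine, and the edge weights enter only through the single bound $w_{ij}\le w_{\max}$, so the weighted case needs no separate treatment.
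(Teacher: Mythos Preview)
Your argument is correct. The paper, by contrast, does not give a self-contained proof: it simply invokes Corollary~4.4 of Grone and Merris, which furnishes the bound $\lambda_2\le 2\big(1-\cos\tfrac{\pi}{\mathrm{diam}\{\mathcal{G}\}+1}\big)$ for (unweighted) trees, and then applies $1-\cos x\le x^2/2$. Your test-vector construction is in effect a weighted reproof of that corollary: you embed the path eigenvector along a diameter, kill off-path contributions by making $f$ constant on the hanging subtrees, and then use the termwise inequality $m_k\ge 1$ to control the denominator --- the one nontrivial step, which you handle correctly since $g(c)\ge h(c)$ pointwise implies $\min_c g\ge\min_c h$. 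The payoff of your route is that it is self-contained and transparently accommodates edge weights via the single bound $w_{ij}\le w_{\max}$; the paper's route is shorter on the page but outsources the content to the cited reference.
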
\vspace{0.5mm}
\begin{proof}
Follows from \cite[Corollary 4.4]{Grone1990}, noting that $1-\cos x \le \frac{x^2}{2}$ for any~$x$. Clearly, the right hand side is decreasing in $\mathrm{diam}\{\mathcal{G}\}$.
\end{proof}

\begin{figure}
\centering
\includegraphics[scale=1]{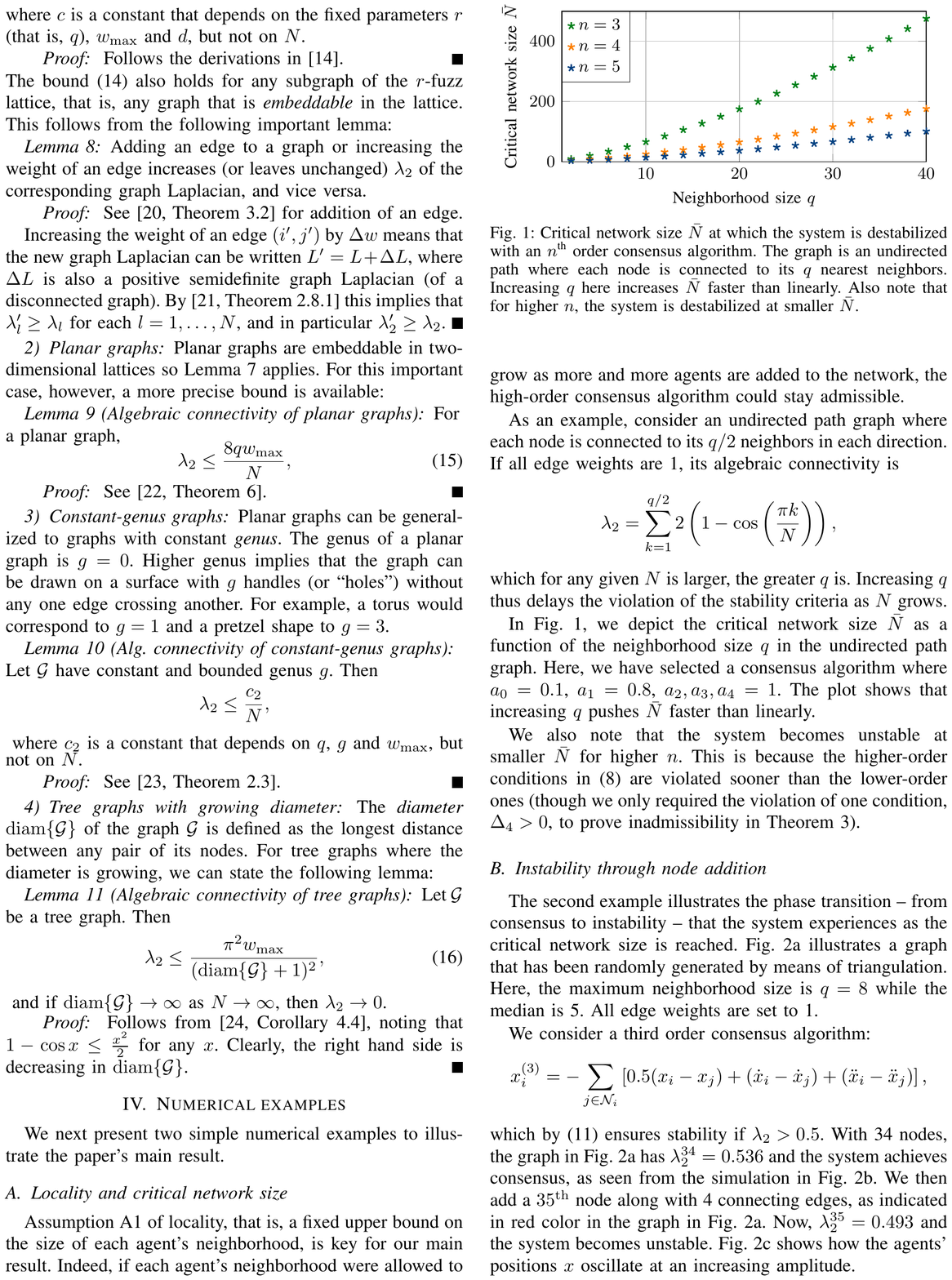}
\caption{Critical network size~$\bar{N}$ at which the system is destabilized with an \nth order consensus algorithm. The graph is an undirected path where each node is connected to its $q$ nearest neighbors. Increasing~$q$ here increases~$\bar{N}$ faster than linearly. Also note that for higher $n$, the system is destabilized at smaller~$\bar{N}$. 
 }
\label{fig:criticalN}
\end{figure}

\begin{figure*}[h!]
  \centering
  \subfloat[][{Network graph}]{
  \includegraphics[width = 0.24\textwidth]{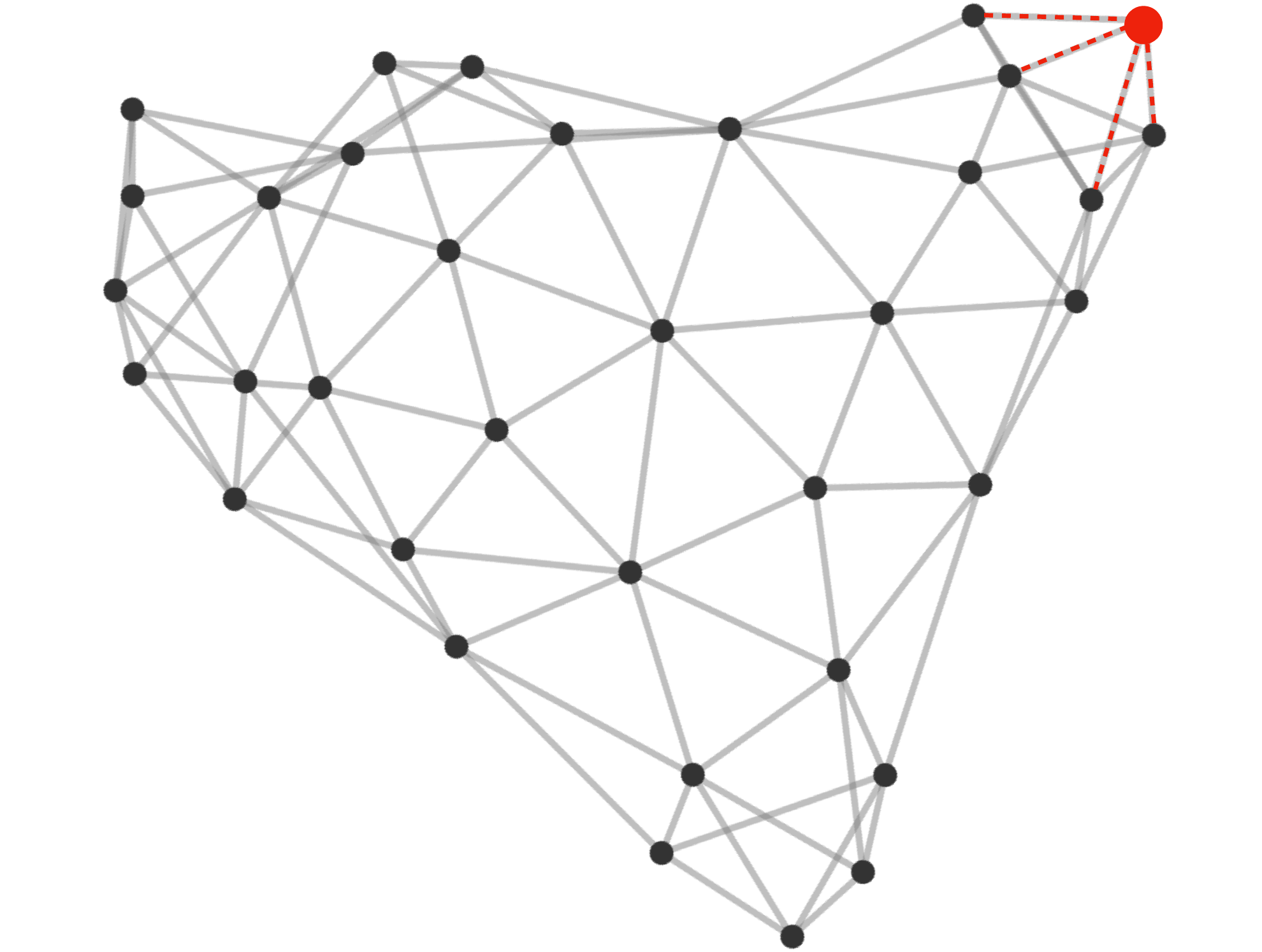}
  \label{fig:graph}  }
  \subfloat[][{$N=34$}] {
  \includegraphics[width = 0.37\textwidth]{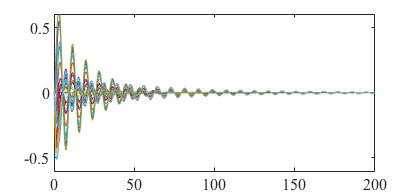}
  \label{fig:34node}}
   \subfloat[][{ $N=35$}] {
  \includegraphics[width = 0.37\textwidth]{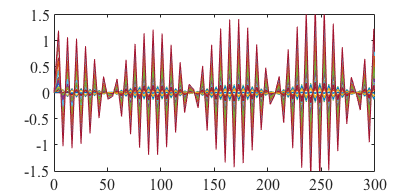}
  \label{fig:35node}} \\
       \caption{
{Simulation of $3^{\mathrm{rd}}$ order consensus in graph depicted in (a), subject to random initial accelerations. In (b) the network's 34 agents converge to an equilibrium. In (c) a $35^{\mathrm{th}}$ node has been added, indicated by red color in the graph. This addition leads to instability. The plots (b) and (c) show position trajectories relative to Agent no. 1. Note the different scales.   } }
\label{fig:simulation}
\end{figure*}

\section{Numerical examples}
\label{sec:numerical}
We next present two simple numerical examples to illustrate the paper's main result.

\subsection{Locality and critical network size}
Assumption~\ref{ass:q} of locality, that is, a fixed upper bound on the size of each agent's neighborhood, is key for our main result. Indeed, if each agent's neighborhood were allowed to grow as more and more agents are added to the network, the high-order consensus algorithm could stay admissible. 

As an example, consider an undirected path graph where each node is connected to its $q/2$ neighbors in each direction. If all edge weights are 1, its algebraic connectivity is
\[ \lambda_2 =  \sum_{\substack{k = 1  } }^{q/2} 2\left(1- \cos \left(\frac{\pi k}{N} \right) \right),\]
which for any given~$N$ is larger, the greater $q$ is. Increasing~$q$ thus delays the violation of the stability criteria as $N$ grows. 

In Fig.~\ref{fig:criticalN}, we depict the critical network size~$\bar{N}$ as a function of the neighborhood size~$q$ in the undirected path graph. Here, we have selected a consensus algorithm where $a_0 = 0.1$, $a_1 = 0.8$, $a_2,a_3,a_4 = 1$. The plot shows that increasing~$q$ pushes $\bar{N}$ faster than linearly. 

We also note that the system becomes unstable at smaller~$\bar{N}$ for higher~$n$. This is because the higher-order conditions in~\eqref{eq:Tondlcrit} are violated sooner than the lower-order ones (though we only required the violation of one condition, $\Delta_4 > 0$, to prove inadmissibility in Theorem~\ref{thm:main}). 

\subsection{Instability through node addition}
The second example illustrates the phase transition -- from consensus to instability -- that the system experiences as the critical network size is reached. Fig.~\ref{fig:graph} illustrates a graph that has been randomly generated by means of triangulation. Here, the maximum neighborhood size is $q = 8$ while the median is~5. All edge weights are set to 1. 

We consider a third order consensus algorithm: 
\[x_i^{(3)} = -\sum_{j \in \mathcal{N}_i}\left[  0.5({x}_i - {x}_j)  + (\dot{x}_i - \dot{x}_j)+ (\ddot{x}_i - \ddot{x}_j) \right], \]
which by~\eqref{eq:RHcriterion} ensures stability if $\lambda_2>0.5$. With 34 nodes, the graph in Fig.~\ref{fig:graph} has $\lambda_2^{34} = 0.536$ and the system achieves consensus, as seen from the simulation in Fig.~\ref{fig:34node}. We then add a $35^{\mathrm{th}}$ node along with 4 connecting edges, as indicated in red color in the graph in Fig.~\ref{fig:graph}. Now, $\lambda_2^{35} = 0.493$ and the system becomes unstable.
Fig.~\ref{fig:35node} shows how the agents' positions~$x$ oscillate at an increasing amplitude.



\section{Conclusions}
\label{sec:discussion}
This paper's results show that there is an important difference between the well-studied standard first- and second-order consensus algorithms, and the corresponding higher-order algorithm, in that the latter is not always scalable to large networks. In classes of graphs where the algebraic connectivity decreases towards zero due to a locality constraint, it will cause instability at some finite network size.
%
%
%
An interesting consequence of this result is that the addition of only one agent to a given multi-agent network can render a previously converging system unstable. 

A further interesting consequence is that an analysis of asymptotic (in network size) performance of localized, consensus-like feedback is only possible in first- and second-order integrator networks. This means that the analysis in~\cite{Bamieh2012} cannot, as was conjectured there, be extended to chains of $n>2$ integrators. 

The model assumed in this work -- the integrator chain -- is standard in the consensus literature. A valid question is, however, to which extent our result applies to more general dynamics. Such a discussion requires a distinction between relative feedback, as considered here, and absolute feedback from local states (such as damping). It is part of ongoing work to characterize the local dynamical properties under which consensus remains (in)admissible.
Preliminary results indicate that absolute feedback of high-derivative states (e.g. acceleration if $n = 3$) would be necessary for admissibility. 

%
%


\section*{Acknowledgement}
We wish to thank Richard Pates and Swaroop Darbha for their insightful comments and a number of interesting discussions. 

\bibliographystyle{IEEETran}
\bibliography{emmasbib2015,BassamBib,consrefs}
\vspace{1mm}

\end{document}